\theoremstyle{definition}
\newtheorem{pr}{Proposition}[section]
\newtheorem{defn}{Definition}[section]
\newcommand{\bib}[2]{\frac{\partial {#1}}{\partial {#2}}}
\newcommand{\bbib}[3]{\frac{\partial^2 {#1}}{\partial {#2}{\partial {#3}}}}
\def\w{{\wedge}}
\begin{document}
\title{Finsler connection for general Lagrangian systems}
\author{L\'aszl\'o Kozma}
 \email{kozma@unideb.hu}
 \affiliation{Insitute of Mathematics, University of Debrecen, P O Box 12, H-4010 Debrecen, Hungary}
\author{Takayoshi Ootsuka}
 \email{ootsuka@cosmos.phys.ocha.ac.jp}
 \affiliation{Physics Department, Ochanomizu University, 2-1-1 Ootsuka Bunkyo Tokyo, Japan }
\date{\today}

\begin{abstract}

We give a new simplified definition of a non-linear connection of Finsler geometry
which could be applied for not only regular case but also singular case.
For the regular case, it corresponds to the non-linear part of
the Berwald's connection,
but our connection is expressed not in the line element space but in the point-Finsler space.
In this view we recognize a Finsler metric $L(x,dx)$ as a ``non-linear form'',
which could be regarded as a generalisation
of the original expression of Riemannian metric, $\sqrt{g_{\mu\nu}(x)dx^\mu dx^\nu}$.
Furthermore our formulae are easy to calculate compared to the conventional methods,
which encourages the application to physics.
This definition can be used in the case where the Finsler metric is singular,
which corresponds to gauge constrained systems in mechanics.
Some non-trivial examples of constrained systems
are introduced for exposition of applicability of the connection.

\end{abstract}

\maketitle

\section{Introduction}

Finsler geometry has a large potential of applications to physics or other mathematical
sciences.
Usually a Finsler metric $L(x,dx)$ on $M$ is defined by a function of $TM$ which satisfies
i) {\it regularity},
ii) {\it positive homogeneity}, and iii) {\it strong convexity},
in the standard textbook~\cite{Bao-Chern-Shen, ChernChenLam}.
However, in applications of Finsler geometry to Lagrange systems,
almost all systems do not satisfy the regularity condition, in other words,
$L$ is not defined on the whole slit tangent bundle $TM^{\circ}=TM-\{0\}$
but only on a sub-bundle $D(L) \subset TM$, depending on $L$.
Furthermore, the most important Lagrangian systems in physics: the gauge theories,
do not satisfy the strong convexity.
Therefore we will only consider the weaker regularity condition and the
following positive homogeneity condition of $L$ as the definition of Finsler metric.
\begin{eqnarray}
 L(x,\lambda dx)=\lambda L(x,dx), \quad {}^\forall \lambda >0.\label{homo}
\end{eqnarray}

Any Lagrangian systems of finite degree of freedom can be reformulated
in such Finsler manifolds~\cite{Lanczos, Suzuki1956} without changing their physical
contents,
and the action functional is given by the integral of the Finsler metric
which is made from the Lagrangian,
then the variational principle becomes geometric and
independent of parameterisation, which we will call covariant.

From the point of view of a physicist,
especially when thinking about the Lagrangian formulation,
we are inclined to define a non-linear connection not on a line element space $TM^\circ$
but directly on the point manifold $M$~\cite{Kozma-Tamassy1}.
Usual treatments of Finsler connection based on line element space; slit tangent bundle
or projected tangent bundle; are rather similar to the
Hamiltonian formulation.
The best covariant Hamiltonian formulation
using contact manifold~\cite{Aldaya} is deeply related to the covariant Lagrangian formulation.
Furthermore, the Hamiltonian formulation corresponds to
projected tangent bundle formulation of Finsler geometry~\cite{ChernChenLam}
in special cases.
Since we would like to consider Lagrangian formulation and not Hamiltonian formulation,
we think that the point Finsler viewpoint is better suited for our objective.
If we consider only Euler-Lagrange equation, symmetry of the system
or Noether's theorem~\cite{Ootsuka2012, OYIT2014},
Finsler connection is not a necessity.
However, if we want to consider the auto-parallel forms
of Euler-Lagrange equation, or
seek new symmetries and conserved quantities,
our Finsler non-linear connection can be a huge help to us.

In the next section we give a generalisation of linear connection of a vector bundle
$E \stackrel{\pi}{\to} M$
to a non-linear connection, from a some different kind of view.
We firstly define a non-linear connection not to vector fields
but to dual (covector) fields,
by generalising the coefficients of linear connection as functions of $x^\mu$
and $e^a$, where $x^\mu$ are coordinates functions of $M$ and $e^a$ are the dual basis of
$e_a$, the basis of section of $E$.
Finally we will define the non-linear covariant derivative of $\Gamma(E)$
by the covariant derivative of $\Gamma(E^\ast)$ from using their duality.
In Section 3, we generalise a linear connection of $TM$ to the non-linear connection
preserving Finsler metric $L$, that is our Finsler non-linear connection.
In our point of view, the non-linear connection is defined on $\Gamma(TM)$,
that is tangent vector fields over $M$, and leads to a non-linearly parallel transport
preserving Finsler norm.
There we show the existence and uniqueness of such a non-linear connection
of a general Finsler metric including singular metric.
In Section 4, we give a short review of a covariant Lagrangian formulation
using Finsler manifold, and give the Euler-Lagrange equation to an auto-parallel
form in the general cases.
In the last section, we give some examples of Lagrangian systems
including non-trivial gauge constrained systems.
We hope that our non-linear connection will be  applied to more several areas.

\section{Non-linear generalisation of linear connection}

Let $\stackrel{\circ}{\nabla}$ be a linear connection on a vector bundle
$E \stackrel{\pi}{\to} M$, i.e.,
$\stackrel{\circ}{\nabla}: \Gamma(E) \rightarrow \Gamma(T^* M \otimes E)$.
The contraction with the vector field $X=X^\mu \bib{}{x^\mu}$ over $M$
gives the covariant derivative $\stackrel{\circ}{\nabla}_X:\Gamma(E)\rightarrow \Gamma(E)$
along $X$.
This $\stackrel{\circ}{\nabla}$ can also define the covariant derivative on
dual vector bundle $E^\ast$,
$\stackrel{\circ}{\nabla}_X:\Gamma(E^\ast)\rightarrow \Gamma(E^\ast)$.
In coordinates, this can be given by
$\stackrel{\circ}{\nabla}_{\bib{}{x^\mu}}\!e^a
= -
{\stackrel{\circ}{\varGamma}} {{}^a}_{b \mu}(x)e^b
$,
where $e^\ast=\{e^a\}$ is the local basis of $E^\ast$.
We generalise this linear connection
$\stackrel{\circ}{\nabla}$ on vector bundle
to non-linear connection $\nabla$
by replacing
${\stackrel{\circ}{\varGamma}} {{}^a}_{b\mu}(x)e^b$
by ${\varGamma^a}_{\mu}(x, e^*)$,
which is a $1$-degree homogeneous function of $e^c$.
That is, the non-linear connection $\nabla$ is defined by
\begin{eqnarray}
\nabla e^a := -{\varGamma^a}(x,e^\ast)
 =- dx^\mu \otimes {\varGamma^a}_{\mu}(x,e^\ast), \quad
\nabla_{\bib{}{x^\mu}}e^a = - {\varGamma^a}_{\mu}(x,e^\ast).
\end{eqnarray}
For a general section $\rho=\rho_a e^a \in \Gamma(E^\ast)$,
\begin{eqnarray}
 \nabla \rho := d\rho_a \otimes e^a
 - \rho_a dx^\mu \otimes {\varGamma^a}_{\mu}(x,e^\ast).
\end{eqnarray}
Notice that $\nabla_X \rho \not \in \Gamma({E^\ast}) $, however
it holds linearity,
$\nabla_X (\rho_1 + \rho_2) = \nabla_X \rho_1 + \nabla_X \rho_2$.

The action of $\nabla$ to the section of $E$ can be defined as follows.
Let $\xi=\xi^a \otimes e_a$ be a smooth section of $E$, and consider the derivative of
$\xi^a=\langle e^a,\xi\rangle$,
\begin{eqnarray}
 d\xi^a=\langle \nabla e^a,\xi\rangle+\langle e^a,\nabla \xi\rangle
 =-\langle {\varGamma^a}(x,e^\ast),\xi\rangle+\langle e^a,\nabla \xi\rangle,
\end{eqnarray}
therefore we define
\begin{eqnarray}
 \nabla \xi:=
 \left(d\xi^a+{\varGamma^a}(x,e^\ast(\xi)) dx^\mu \right)\otimes e_a, \quad
 \nabla_X \xi= X^\mu \left( \frac{\partial \xi^a}{\partial x^\mu}
+{\varGamma^a}_{\mu}(x,e^\ast(\xi)) \right)\otimes e_a.
\end{eqnarray}
$\nabla_X$ maps $\Gamma(E)$ to $\Gamma(E)$, but the linearity does not hold;
$\nabla_{X}(\xi_1+\xi_2) \neq \nabla_{X} \xi_1 + \nabla_X \xi_2$.
Taking a coordinates $(x^\mu, e^a)$ of $E$,
we can define a distribution ${\cal N}$ of $TE$:
\begin{eqnarray}
 {\cal N}=
 \left\langle \frac{\delta}{\delta x^\mu}:=\bib{}{x^\mu}-{\varGamma^a}_\mu(x,e^\ast)
 \bib{}{e^a} 
 \right\rangle,
\end{eqnarray}
which is a usual definition of non-linear connection of $E$~\cite{Miron}.

For physical problems, we use more often the derivative of covariant quantities
than contravariant quantities, so this definition of non-linear connection
is useful to applications of physics.

\section{Generalised Berwald's non-linear connection}

Let us consider a Finsler manifold $(M,L)$,
where $M$ is a $(n+1)$-dimensional differentiable manifold and
$L(x^\mu,dx^\mu)$ be a Finsler metric, using coordinates $(x^\mu)$ of $M$,
which is  a function of $x^\mu$ and $dx^\mu$ and
1-degree homogeneous function of $dx^\mu$.
In our introduction we assume only regurality on a sub-bundle $D(L) \subset TM$ and
homogeneity condition (\ref{homo}), and not the convexity condition.
We define a new Finsler connection,
which is a generalisation of Berwald's connection,
using the previous non-linear connection
defined on $TM$ which preserves the Finsler 1-form $L$;
$\nabla L=0$.
\begin{defn}
A non-linear Finsler connection $\nabla$ is such that satisfies the following conditions:
\begin{eqnarray}
& \displaystyle
\nabla dx^\alpha
 =-{N^\alpha}(x,dx)
 =-dx^\mu \otimes {N^\alpha}_{\mu}(x,dx), \label{N}
\\
& \displaystyle
\bib{{N^\alpha}_\mu}{dx^\beta}-\bib{{N^\alpha}_\beta}{dx^\mu}=0, \label{symmetry}\\
& \displaystyle
\bib{L}{x^\mu}=p_\alpha {N^\alpha}_{\mu}, \quad
 p_\alpha:=\bib{L}{dx^\alpha},
\label{Fpreserve}
\end{eqnarray}
where ${N^\alpha}_{\mu}={N^\alpha}_{\mu}(x,dx)$ and
${N^\alpha}_{\beta\mu}:=\bib{{N^\alpha}_\mu}{dx^\beta}$ are $1$-degree and $0$-degree
homogeneous functions of $dx=\{dx^\mu\}$ each other.
\end{defn}
The last condition means the condition of preserving the Finsler metric $L$;
$0=\nabla L=\nabla x^\mu \bib{L}{x^\mu}+\nabla dx^\mu \bib{L}{dx^\mu}$,
which is a generalisation of covariant derivative to ``non-linear form'' $L(x,dx)$.
The covariant derivative of tangent vector field $Z=Z^\mu \bib{}{x^\mu}$ on $M$ by this 
non-linear connection can be defined same as previous section:
\begin{eqnarray}
 \nabla Z:=\left\{dZ^\mu +{N^\mu}_\alpha \left(x,dx(Z)\right) dx^\alpha \right\}
 \otimes \bib{}{x^\mu},
\end{eqnarray}
and the covariant derivative of $Z$ along a tangent vector field $X=X^\mu \bib{}{x^\mu}$
on $M$ also can be defined by,
\begin{eqnarray}
 \nabla_X Z:=X^\nu \left\{\bib{Z^\mu}{x^\nu}+{N^\mu}_{\nu}\left(x,dx(Z)\right)\right\}
 \otimes \bib{}{x^\mu}.
\end{eqnarray}
Here we consider only tangent vectors only on the point manifold $M$ not on 
the line element space.
\begin{pr}
The Finsler norm of vectors is conserved by a parallel displacement along a curve
$c(t)$.
That is $\displaystyle \frac{dL(Z)}{dt}=0$ if $\nabla_{\dot{c}(t)} Z=0$.
\end{pr}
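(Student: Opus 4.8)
The plan is to reduce the statement to the chain rule together with the metric-preservation condition~(\ref{Fpreserve}). Fix the curve $c$, write $Z^\mu(t)$ for the components of $Z$ along $c$ and $\dot c^\mu(t)$ for those of $\dot c(t)$; then $L(Z)$ along $c$ is the composite $t\mapsto L(c(t),Z(t))$, i.e. the Finsler metric evaluated with the slot $dx^\alpha$ filled by the vector components $Z^\alpha$. Differentiating, $\frac{d}{dt}L(Z)=\bib{L}{x^\mu}\dot c^\mu+\bib{L}{dx^\alpha}\dot Z^\alpha$, where both partial derivatives are taken at the line element $(c(t),Z(t))\in D(L)$; with the abbreviation $p_\alpha=\bib{L}{dx^\alpha}$ this reads $\frac{d}{dt}L(Z)=\bib{L}{x^\mu}\dot c^\mu+p_\alpha\dot Z^\alpha$.

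Next I would rewrite the hypothesis. Taking $X=\dot c(t)$ in the coordinate formula for $\nabla_X Z$, the condition $\nabla_{\dot c(t)}Z=0$ becomes $\dot c^\nu\big(\bib{Z^\mu}{x^\nu}+{N^\mu}_\nu(c(t),Z(t))\big)=0$. Since $\dot Z^\mu=\dot c^\nu\bib{Z^\mu}{x^\nu}$ by the chain rule along $c$, this is the same as $\dot Z^\mu=-\dot c^\nu{N^\mu}_\nu(c(t),Z(t))$. Substituting into the previous display and relabelling the contracted index, $\frac{d}{dt}L(Z)=\big(\bib{L}{x^\mu}-p_\alpha{N^\alpha}_\mu\big)\dot c^\mu$, with all quantities evaluated at $(c(t),Z(t))$.

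Finally, the metric-preservation condition~(\ref{Fpreserve}) asserts precisely $\bib{L}{x^\mu}=p_\alpha{N^\alpha}_\mu$ as an identity of homogeneous functions on $D(L)$, so evaluating it at $(c(t),Z(t))$ annihilates the parenthesis and yields $\frac{d}{dt}L(Z)=0$. I would also point out that the symmetry condition~(\ref{symmetry}) plays no role here: only the definition of $\nabla_X$ and condition~(\ref{Fpreserve}) enter. There is essentially no obstacle beyond bookkeeping; the one point demanding care is that $L$, $p_\alpha$ and ${N^\alpha}_\mu$ all live on the line-element space and must be evaluated consistently at the line element $(c(t),Z(t))$, which tacitly requires the parallel transport of $Z$ to remain inside the domain $D(L)$ — automatic in the regular case, and part of what ``parallel displacement'' means in the singular case.
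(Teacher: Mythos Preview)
Your proof is correct and essentially identical to the paper's: both compute $\frac{d}{dt}L(Z)$ by the chain rule and then combine the parallel-transport hypothesis with the metric-preservation identity~(\ref{Fpreserve}), differing only in the order of these two substitutions. Your remark that condition~(\ref{symmetry}) is not used is also accurate and a helpful observation not made explicit in the paper.
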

\begin{proof}
\begin{eqnarray}
 \frac{dL(Z)(c)}{dt}
 =\frac{dx^\mu(c)}{dt} \bib{L}{x^\mu}+\frac{dx^\nu(c)}{dt}\bib{Z^\mu}{x^\nu} \bib{L}{dx^\mu}
 =\frac{dx^\nu(c)}{dt}
 \left\{
 {N^\mu}_\nu\left(x,dx(Z)\right)+\bib{Z^\mu}{x^\nu} 
 \right\}\bib{L}{dx^\mu}=0. \nonumber
\end{eqnarray} 
\end{proof}
\begin{defn}
We denote $L_{\mu \nu}:= \bbib{L}{dx^\mu}{dx^\nu}$.
For a Finsler metric of our definition,
$(n+1)\times (n+1)$ matrix $(L_{\mu\nu})$ always satisfies
${\rm rank}\left(L_{\mu \nu}\right)\leq n$, where we assume ${\rm dim}M=n+1$.
If ${\rm rank}\left(L_{\mu \nu}\right)=n$,
then the Finsler metric $L$ is called {\it regular}.
Otherwise it is  called {\it singular}.
\end{defn}
In applications to Lagrangian systems, we will see later
that the Lagrangian of non-constrained systems correspond to regular Finsler
metrics, and constrained systems (gauge theories) correspond
to singular Finsler metrics.
In application to physics, the singular Finsler manifolds are very important
for gauge theories which often appear in several areas of physics.

If $L$ is a singular Finsler metric and $(L_{\mu\nu})$ has constant rank:
${\rm rank}\left(L_{\mu \nu}\right)=n-1-D$,
there are $D$ independent zero eigenvectors
$v^\mu_I(x,dx) \, (I=1,2,\cdots ,D)$ which are functions of $x^\mu$ and $dx^\mu$ and,
satisfying
\begin{eqnarray}
 L_{\mu\nu}v^\nu_I=0, \quad p_\mu v^\mu_I=0. \label{vI}
\end{eqnarray}
If $v^\mu_I$ satisfy only the former and not the latter $p_\mu v^\mu_I=w_I\neq 0$,
we can replace $v^\mu_I$ to $v^\mu_I-w_I \frac{dx^\mu}{L}$.

\begin{pr}
Take a coordinate system such that ${\rm det}(L_{ab})\neq 0, \,
(a,b=D+1,D+2,\cdots n)$ then,
\begin{eqnarray}
 \begin{array}{lll}
 \displaystyle
 \ell^\mu_0=\frac{dx^\mu}{L}, \quad &
 \ell^\mu_I= v^\mu_I, &  
 \displaystyle
 \ell^\mu_a=L\bib{\ell^\mu_0}{dx^a}=\delta^\mu_a-\frac{p_a dx^\mu}{L},
 \\
 & (I=1,2,\cdots D), \,& (a=D+1,D+2,\cdots, n),
 \end{array}
\end{eqnarray}
become $(n+1)$ independent vectors.
\end{pr}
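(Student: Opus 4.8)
The plan is to show directly that the only linear relation
\begin{eqnarray}
 c^0 \ell^\mu_0 + \sum_{I=1}^{D} c^I \ell^\mu_I + \sum_{a=D+1}^{n} c^a \ell^\mu_a = 0
\end{eqnarray}
among these vectors is the trivial one; since there are exactly $n+1$ of them in the $(n+1)$-dimensional tangent space $T_xM$, independence will at once make them a basis. The strategy is to recover the coefficients one block at a time by contracting the relation successively with the covector $p_\alpha=\bib{L}{dx^\alpha}$ and with the matrix $L_{\alpha\beta}$, exploiting the homogeneity of $L$ together with the defining properties (\ref{vI}) of the zero eigenvectors $v^\mu_I$.

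First I would record the two Euler identities that come for free from homogeneity: since $L$ is $1$-homogeneous in $dx$, $p_\alpha\,dx^\alpha=L$, and since $p_\alpha$ is then $0$-homogeneous, $L_{\alpha\beta}\,dx^\beta=0$ (this is precisely the inequality ${\rm rank}(L_{\mu\nu})\le n$ of the previous Definition). From these, $p_\mu \ell^\mu_0=(p_\mu\,dx^\mu)/L=1$, while $p_\mu \ell^\mu_a=p_a-p_a(p_\nu\,dx^\nu)/L=0$ and $p_\mu \ell^\mu_I=p_\mu v^\mu_I=0$ by the second equation of (\ref{vI}). Contracting the assumed relation with $p_\mu$ therefore kills the $\ell_I$ and $\ell_a$ terms and leaves $c^0=0$. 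This is exactly the place where the normalisation $p_\mu v^\mu_I=0$ is used (and why the text replaces $v^\mu_I$ by $v^\mu_I-w_I\,dx^\mu/L$ when it fails).

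Next, with $c^0=0$, I would contract the surviving relation $\sum_I c^I v^\mu_I+\sum_a c^a \ell^\mu_a=0$ with $L_{\mu\nu}$. The first equation of (\ref{vI}) annihilates the $v_I$ terms, and $L_{\mu\nu}\ell^\nu_a=L_{\mu a}-(p_a/L)L_{\mu\nu}\,dx^\nu=L_{\mu a}$, so $\sum_a c^a L_{\mu a}=0$ for every $\mu$; in particular, restricting $\mu$ to the block $b=D+1,\dots,n$ and invoking the hypothesis ${\rm det}(L_{ab})\neq 0$ forces $c^a=0$ for all $a$. What then remains is $\sum_I c^I v^\mu_I=0$, whence $c^I=0$ by the assumed linear independence of the $v^\mu_I$. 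This finishes the proof.

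I do not expect a genuine obstacle: the whole argument is a short piece of linear algebra once the right pairings are chosen. The only points needing care are applying Euler's theorem correctly to obtain $p_\alpha\,dx^\alpha=L$ and $L_{\alpha\beta}\,dx^\beta=0$; remembering to use $p_\mu v^\mu_I=0$, not merely $L_{\mu\nu}v^\nu_I=0$, in the first contraction; and observing that the reduced system $\sum_a c^a L_{ba}=0$ really does involve only the nondegenerate sub-block $(L_{ab})$, which holds because the $\ell_a$ were constructed precisely so that $L_{\mu\nu}\ell^\nu_a=L_{\mu a}$.
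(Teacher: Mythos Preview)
Your argument is correct and follows the same overall plan as the paper: contract with $p_\mu$ to isolate $c^0$, then use the Hessian $L_{\mu\nu}$ to peel off the $c^a$, and finish with the independence of the $v^\mu_I$.

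The only notable difference is in the second step. The paper differentiates the surviving relation $B^I\ell^\mu_I+C^a\ell^\mu_a=0$ with respect to $dx^b$ and \emph{then} contracts with $p_\mu$, using $p_\mu\,\bib{\ell^\mu_I}{dx^b}=0$ (from differentiating $p_\mu v^\mu_I=0$) and computing $p_\mu\,\bib{\ell^\mu_a}{dx^b}=-L_{ba}$. You instead contract directly with $L_{\mu\nu}$, using $L_{\mu\nu}v^\nu_I=0$ and $L_{\mu\nu}dx^\nu=0$ to obtain $L_{\mu\nu}\ell^\nu_a=L_{\mu a}$. Your route is a bit cleaner: it avoids differentiating the relation (which tacitly assumes the coefficients $B^I,C^a$ are constants, true here but an extra thing to keep track of) and arrives at the same system $c^aL_{ba}=0$ in one stroke. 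Both approaches lean on exactly the same ingredients---the Euler identities and (\ref{vI})---so the content is the same; yours is just the more direct packaging.
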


\begin{proof}
Suppose $A \ell^\mu_0+B^I \ell^\mu_I+C^a \ell^\mu_a=0$.
However, multiplying this by $p_\mu$ gives,
\begin{eqnarray}
 A=0, \quad B^I \ell^\mu_I+C^a \ell^\mu_a=0,
\end{eqnarray}
since $p_\mu \ell^\mu_I=p_\mu \ell^\mu_a=0$.
Differentiate the second equation by $dx^b \, (b=D+1,D+2,\cdots,n)$ and multiply by $p_\mu$,
\begin{eqnarray}
 B^Ip_\mu \bib{\ell^\mu_I}{dx^b}
 + C^a p_\mu \bib{\ell^\mu_a}{dx^b}=0,
\end{eqnarray}
and we use the formula,
\begin{eqnarray}
 L_{\mu b}v^\mu_I+p_{\mu}\bib{v^\mu_I}{dx^b}=p_\mu \bib{\ell^\mu_I}{dx^b}=0,
\end{eqnarray}
obtained by differentiating the latter equation of (\ref{vI}) by $dx^b$.
We get,
\begin{eqnarray}
 C^a p_\mu \bib{\ell^\mu_a}{dx^b}
 =C^a p_\mu \left(-L_{ba}\frac{dx^\mu}{L}-p_a \frac{\ell^\mu_b}{L} \right)
 =-L_{ba}C^a=0,
\end{eqnarray}
so $C^a=0$ and $B_I \ell^\mu_I=0$.
Finally since $\ell^\mu_I=v^\mu_I$ are independent vectors,
we get $B_I=0$.
\end{proof}

It is very helpful for calculation
if we define an auxiliary function
$G^\mu:=\frac12 {N^\mu}_{\alpha}dx^\alpha$.
Straight forward calculation with the use of the homogeneity property of 
${N^\mu}_{\alpha}$
and (\ref{symmetry}) gives us,
\begin{eqnarray}
 &&
 \bib{G^\mu}{dx^\alpha}=\frac12{N^\mu}_{\alpha}
 +\frac12\bib{{N^\mu}_{\beta}}{dx^\alpha}dx^\beta
 ={N^\mu}_{\alpha}, \quad
 \frac{\partial^2 G^\mu}{\partial dx^\beta \partial dx^\alpha}
 =\bib{{N^\mu}_{\alpha}}{dx^\beta}
 =:{N^\mu}_{\alpha\beta}.
\end{eqnarray}
Therefore we are able to express the coefficients of connection
by $G^\mu$:
\begin{eqnarray}
 {N^\mu}_{\alpha}=\bib{G^\mu}{dx^\alpha},
 \qquad
 {N^\mu}_{\alpha\beta}
 = \frac{\partial^2 G^\mu}{\partial dx^\beta \partial dx^\alpha}.
 \label{G}
\end{eqnarray}
To determine the connection, it is sufficient to consider
$G^\mu$ instead of ${N^\mu}_{\alpha}$.

\begin{pr}[Existence] \label{exist}
Let $L$ be a singular Finsler metric with ${\rm det}(L_{ab})\neq 0 \, (a,b=D+1,D+2,\dots,n)$.
There exists a non-linear Finsler connection such that their coefficients are expressed
by the following $G^\mu$;
 \begin{eqnarray}
 &&
 G^\mu=\frac12
 \left( dx^\beta \bib{L}{x^\beta}\right) \ell^\mu_0
 +\lambda^I \ell^\mu_I+\lambda^a \ell^\mu_a, \quad
 M_I=L^{ab}L_{aI} M_b, \label{solG}\\
 && \lambda^a=L^{ab}M_b,\quad
 M_\mu=\frac12 \left(- \bib{L}{x^\mu}+
dx^\rho \frac{\partial^2 L}{\partial dx^\mu \partial x^\rho}\right).
\end{eqnarray}
Where $\ell^\mu_0=\frac{dx^\mu}{L}$, $\ell^\mu_I=v^\mu_I$,
$\ell^\mu_a=L\bib{\ell^\mu}{dx^i}$,
$\mu,\beta=0,1,2,\dots ,n$,
$I=1,2,\dots, D$, $a, b=D+1,D+2,\dots,n$,
$L^{ab}$ is the inverse of $L_{ab}$ and $\lambda^I$ are arbitrary
function.
\end{pr}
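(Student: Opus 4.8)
The plan is to trade the connection for its spray coefficients $G^\mu$ and to reduce the three defining conditions to a single linear problem posed in the frame of the preceding Proposition. Putting ${N^\mu}_\alpha=\bib{G^\mu}{dx^\alpha}$ as in (\ref{G}) makes the symmetry condition (\ref{symmetry}) automatic, since ${N^\mu}_{\alpha\beta}=\bbib{G^\mu}{dx^\beta}{dx^\alpha}$ is manifestly symmetric in $\alpha,\beta$; conversely every non-linear Finsler connection arises this way from some $G^\mu$ that is $2$-homogeneous in $dx$. So it suffices to exhibit such a $G^\mu$ satisfying (\ref{Fpreserve}). First I would record the homogeneity identities used throughout: Euler's theorem gives $L=p_\mu dx^\mu$, whence $\bib{L}{x^\mu}=dx^\nu\bib{p_\nu}{x^\mu}$ and $dx^\mu\bbib{L}{dx^\mu}{x^\rho}=\bib{L}{x^\rho}$, and therefore $M_\mu=\frac12\,dx^\nu\!\left(\bib{p_\mu}{x^\nu}-\bib{p_\nu}{x^\mu}\right)$, an antisymmetrized form from which $M_\mu dx^\mu=0$ is immediate.

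Next I would split condition (\ref{Fpreserve}). Contracting $\bib{L}{x^\mu}=p_\alpha{N^\alpha}_\mu$ with $dx^\mu$ and using ${N^\alpha}_\mu dx^\mu=2G^\alpha$ forces $p_\alpha G^\alpha=\frac12\,dx^\beta\bib{L}{x^\beta}$; differentiating this relation in $dx^\mu$ and substituting it back, the residual content of (\ref{Fpreserve}) becomes the purely linear system $L_{\mu\alpha}G^\alpha=M_\mu$, and these two relations together conversely imply (\ref{Fpreserve}). I would then expand $G^\mu=g^0\ell^\mu_0+g^I\ell^\mu_I+g^a\ell^\mu_a$ in the basis furnished by the preceding Proposition. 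Since $p_\mu\ell^\mu_0=1$ and $p_\mu\ell^\mu_I=p_\mu\ell^\mu_a=0$, the trace relation pins $g^0=\frac12\,dx^\beta\bib{L}{x^\beta}$. Since $L_{\mu\alpha}\ell^\alpha_0=L_{\mu\alpha}v^\alpha_I=0$ while $L_{\mu\alpha}\ell^\alpha_a=L_{\mu a}$ with $L_{\mu a}\ell^\mu_b=L_{ab}$ and $M_\mu\ell^\mu_b=M_b$, contracting $L_{\mu\alpha}G^\alpha=M_\mu$ with $\ell^\mu_b$ yields $g^a=L^{ab}M_b=\lambda^a$, while $g^I=:\lambda^I$ is left undetermined. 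This reproduces (\ref{solG}).

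The substance of the proof is the solvability of $L_{\mu\alpha}G^\alpha=M_\mu$. As $(L_{\mu\nu})$ is symmetric with kernel spanned by $dx^\mu$ and the $v^\mu_I$, the system is consistent precisely when $M_\mu dx^\mu=0$ and $M_\mu v^\mu_I=0$. The first holds by the antisymmetric form of $M_\mu$ above. The second, after eliminating the components $v^a_I$ through the $\mu=a$ rows of $L_{\mu\nu}v^\nu_I=0$, is exactly the relation $M_I=L^{ab}L_{aI}M_b$ stated in the Proposition; I would derive it by differentiating the constraint $p_\mu v^\mu_I=0$ in $x^\rho$ (to trade $v^\mu_I\bib{p_\mu}{x^\rho}$ for $-p_\mu\bib{v^\mu_I}{x^\rho}$) and in $dx^b$, the same manoeuvres already appearing in the proof of the preceding Proposition. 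I expect this consistency identity to be the main obstacle: it is the one place where the singular, gauge-constrained structure of $L$ genuinely enters, whereas in the regular case ($D=0$) only $M_\mu dx^\mu=0$ survives and the argument is pure homogeneity. Once it is in hand, $G^\mu$ as in (\ref{solG}) exists, ${N^\mu}_\alpha=\bib{G^\mu}{dx^\alpha}$ has the asserted form, conditions (\ref{N})--(\ref{Fpreserve}) all hold, and the connection is exhibited; since $\lambda^I$ remains free, such connections are not unique.
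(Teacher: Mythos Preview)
Your reformulation of (\ref{Fpreserve}) as the pair $p_\alpha G^\alpha=\tfrac12 dx^\beta\bib{L}{x^\beta}$ and $L_{\mu\alpha}G^\alpha=M_\mu$ is correct and is essentially a cleaner repackaging of the paper's computation; the paper instead differentiates the ansatz for $G^\mu$ explicitly and arrives at the equivalent system $L_{\beta a}\lambda^a=M_\beta$ in (\ref{lambda}). Up to that point the two arguments are the same.

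The genuine gap is your treatment of the relation $M_I=L^{ab}L_{aI}M_b$. You propose to \emph{derive} it as an identity by differentiating $p_\mu v^\mu_I=0$ in $x^\rho$ and in $dx^b$. That will not work: these differentiations only give $v^\mu_I\,\bib{p_\mu}{x^\rho}=-p_\mu\,\bib{v^\mu_I}{x^\rho}$ and $p_\mu\,\bib{v^\mu_I}{dx^b}=0$, and neither controls the term $v^\mu_I\,\bib{L}{x^\mu}$ that survives in $M_\mu v^\mu_I$. In fact $M_I=L^{ab}L_{aI}M_b$ is \emph{not} an identity at all. In the paper's second-class example one has $L_{\mu\nu}=0$, so the right-hand side is empty and the condition reads $M_I=0$; a direct computation gives $M_1=-(dx^2+x^1dx^0)$, which is the nontrivial constraint ${\cal C}_1$ in (\ref{ex2-const}). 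Likewise in the Frenkel example the condition becomes ${\cal C}=\tfrac14(x^3)^2dx^0=0$.

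The paper does not attempt to prove this relation; it records it as ``constraints of the system'' (the equations ${\cal C}_I=0$ that later accompany the auto-parallel form (\ref{autop})). The ``existence'' being asserted is that the formula (\ref{solG}) for $G^\mu$ solves the trace equation (\ref{omega}) and the $\beta=b$ block of (\ref{lambda}), with the $\beta=0$ equation shown to be redundant and the $\beta=I$ equations left over as genuine constraints. Your proof will go through once you stop trying to prove $M_\mu v^\mu_I=0$ and instead present it, as the paper does, as the constraint content of the Proposition.
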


\begin{proof}
From the definition
of coefficients of the connection (\ref{G}) and
(\ref{symmetry}) are automatically satisfied,
so it only remains to show (\ref{Fpreserve}).
If we multiply (\ref{Fpreserve}) by $dx^\beta$, we get
inhomogeneous linear equation of $G^\mu$;
\begin{eqnarray}
 dx^\beta \bib{L}{x^\beta}=2p_\mu G^\mu. \label{omega}
\end{eqnarray}
This can be solved as
\begin{eqnarray}
 G^\mu=
 \frac12 \left(dx^\beta \bib{L}{x^\beta} \right) \frac{dx^\mu}{L}
 +\lambda^I \ell^\mu_I+\lambda^a \ell^\mu_a,
\end{eqnarray}
using  $\ell^\mu_0, \ell^\mu_I, \ell^\mu_a$
and $p_\mu \ell^\mu_0=1$, $p_\mu \ell^\mu_I=p_\mu \ell^\mu_a=0$.
Here $\lambda^I$ and $\lambda^a$ are still unknown functions of $x^\mu$ and $dx^\mu$.
Let us determine $\lambda^a$ from (\ref{Fpreserve}).
Differentiating $G^\mu$ by $dx^\beta$,
\begin{eqnarray}
 \bib{G^\mu}{dx^\beta}=\frac12\left(
			       \bib{L}{x^\beta}
 +dx^\nu \frac{\partial^2 L}{\partial dx^\beta \partial x^\nu}
 \right) \frac{dx^\mu}{L}
 +\frac12 \left(dx^\gamma \bib{L}{x^\gamma} \right)
 \frac{L\delta^\mu_\beta-p_\beta dx^\mu}{L^2} \nonumber
 \\
 +\bib{\lambda^I}{dx^\beta}\ell^\mu_I
 +\lambda^I \bib{\ell^\mu_I}{dx^\beta}
 +\bib{\lambda^a}{dx^\beta}\ell^\mu_a
 +\lambda^a \bib{\ell^\mu_a}{dx^\beta},
\end{eqnarray}
and putting this into the right hand side of (\ref{Fpreserve}),
\begin{eqnarray}
 p_\mu {N^\mu}_{\alpha\beta}dx^\alpha
 =p_\mu \bib{G^\mu}{dx^\beta}
 =\frac12\left(
 \bib{L}{x^\beta}
 +dx^\nu \frac{\partial^2 L}{\partial dx^\beta \partial x^\nu}
 \right)+\lambda^a p_\mu \bib{\ell^\mu_a}{dx^\beta}. \label{pGamma}
\end{eqnarray}
The last term of (\ref{pGamma}) becomes
\begin{eqnarray}
 p_\mu \bib{\ell^\mu_a}{dx^\beta}
 =p_\mu \left(-\frac{\delta^\mu_\beta p_a}{L}+\frac{dx^\mu p_a p_\beta}{L^2}
 -\frac{dx^\mu}{L}\bib{p_a}{dx^\beta}
 \right)
 =-\bib{p_a}{dx^\beta}
 =-L_{\beta a}, 
\end{eqnarray}
then (\ref{Fpreserve}) becomes the following equations for $\lambda^i$;
\begin{eqnarray}
 L_{\beta a}\lambda^a
 =M_\beta, \quad
 M_\beta:=\frac12 \left(
 -\bib{L}{x^\beta}+dx^\nu \frac{\partial^2 L}{\partial dx^\beta \partial x^\nu}
 \right).
 \label{lambda}
\end{eqnarray}
First, we can solve the following $(n-D)$-linear equations of the (\ref{lambda}),
\begin{eqnarray}
 L_{ab}\lambda^a=M_b.
 \label{lambda1}
\end{eqnarray}
From (\ref{lambda1}) we can determine $\lambda^a$
by using the inverse matrix $L^{ab}$,
\begin{eqnarray}
 \lambda^a=L^{ab}M_b. \label{lambdaans}
\end{eqnarray}
The other equations which can be obtained from (\ref{lambda}) are,
\begin{eqnarray}
 L_{0a}\lambda^a=M_0, \quad
 L_{Ia} \lambda^a=M_I. \label{lambda2}
\end{eqnarray}
The second equation of (\ref{lambda2}) implies the first, since
\begin{eqnarray}
 dx^0 L_{0a}\lambda^a
 &=& \left(-dx^I L_{Ia}-dx^b L_{ba}\right) \lambda^a
 = -dx^I M_I-dx^b M_b =-dx^\mu M_\mu+dx^0 M_0 \nonumber \\
 &=& -dx^\mu \frac12 \left(
 -\bib{L}{x^\mu}+dx^\beta \frac{\partial^2 L}{\partial dx^\mu \partial x^\beta}
 \right)+dx^0 M_0 \nonumber \\
  &=&\frac12\left(
 dx^\mu \bib{L}{x^\mu}-dx^\beta \bib{L}{x^\beta}
 \right)+dx^0 M_0= dx^0 M_0.
\end{eqnarray}
The second equation of (\ref{lambda2}) should be regarded as constraints of the system.
This derivation shows that $G^\mu$ exists,
and is uniquely determined up to arbitrary $\lambda^I \, (I=1,2,\dots,D)$.
\end{proof}

\begin{pr}[Uniqueness]\label{unique}
 Up to arbitrary $D$ functions $\lambda^I \, (I=1,2,\dots, D)$,
 non-linear Finsler connection ${N^\mu}_{\alpha}(x,dx)$
 which satisfies (\ref{symmetry}) and (\ref{Fpreserve})
 is uniquely obtained by
 $\displaystyle {N^\mu}_{\alpha}=\bib{G^\mu}{dx^\alpha}$,
 where $G^\mu$ is of the previous proposition.
\end{pr}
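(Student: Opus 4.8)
The plan is to first show that writing ${N^\mu}_{\alpha}=\bib{G^\mu}{dx^\alpha}$ is not merely convenient but \emph{forced}, reducing the claim to a uniqueness statement for the auxiliary function $G^\mu$, and then to run, on the difference of two admissible connections, the same chain of eliminations already performed in the proof of Proposition~\ref{exist}. For the reduction, let ${N^\mu}_{\alpha}(x,dx)$ be any connection as in the Definition (hence $1$-homogeneous in $dx$) satisfying the symmetry condition~(\ref{symmetry}). Put $G^\mu:=\frac12{N^\mu}_{\alpha}dx^\alpha$; then Euler's identity for the $1$-homogeneous ${N^\mu}_{\alpha}$ together with~(\ref{symmetry}) gives $\bib{G^\mu}{dx^\alpha}={N^\mu}_{\alpha}$, which is exactly the computation preceding~(\ref{G}). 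Consequently ${N^\mu}_{\alpha}$ is determined up to the $D$ functions $\lambda^I$ if and only if the $2$-homogeneous $G^\mu$ subject to~(\ref{Fpreserve}) --- equivalently to $p_\alpha\bib{G^\alpha}{dx^\mu}=\bib{L}{x^\mu}$ --- is determined modulo the span of the zero-eigenvectors $v^\mu_I$.

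For the uniqueness of $G^\mu$, suppose $G^\mu$ and $\tilde G^\mu$ both solve $p_\alpha\bib{G^\alpha}{dx^\mu}=\bib{L}{x^\mu}$, one of them being the solution produced by Proposition~\ref{exist}, and set $H^\mu:=G^\mu-\tilde G^\mu$. This $H^\mu$ is again $2$-homogeneous in $dx$ and satisfies the homogeneous system $p_\alpha\bib{H^\alpha}{dx^\mu}=0$ for every $\mu$. Contracting with $dx^\mu$ and applying Euler's theorem to the $2$-homogeneous $H^\alpha$ gives $2\,p_\alpha H^\alpha=0$, so $p_\mu H^\mu=0$. Expanding $H^\mu$ in the frame $\{\ell^\mu_0,\ell^\mu_I,\ell^\mu_a\}$ furnished by the earlier Proposition that exhibits these as $n+1$ independent vectors, $H^\mu=A\,\ell^\mu_0+B^I\ell^\mu_I+C^a\ell^\mu_a$, and contracting with $p_\mu$ (using $p_\mu\ell^\mu_0=1$ and $p_\mu\ell^\mu_I=p_\mu\ell^\mu_a=0$) forces $A=0$.

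It then remains to show $C^a=0$, which I would do by copying the argument in the proof of Proposition~\ref{exist}: differentiate $H^\mu=B^I\ell^\mu_I+C^a\ell^\mu_a$ by $dx^b$ $(b=D+1,\dots,n)$ and contract with $p_\mu$. The terms carrying $\bib{B^I}{dx^b}$ and $\bib{C^a}{dx^b}$ vanish because $p_\mu\ell^\mu_I=p_\mu\ell^\mu_a=0$; the term $B^Ip_\mu\bib{\ell^\mu_I}{dx^b}$ vanishes by the identity $p_\mu\bib{\ell^\mu_I}{dx^b}=0$ obtained there by differentiating $p_\mu v^\mu_I=0$ from~(\ref{vI}); and $C^ap_\mu\bib{\ell^\mu_a}{dx^b}=-L_{ba}C^a$ by the same computation used there. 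Since the left-hand side is $0$, we get $L_{ba}C^a=0$, hence $C^a=0$ because $\det(L_{ab})\neq0$. Therefore $H^\mu=B^Iv^\mu_I$, that is $G^\mu=\tilde G^\mu+B^I\ell^\mu_I$, which is precisely the replacement $\lambda^I\mapsto\lambda^I+B^I$ allowed in~(\ref{solG}); combined with the reduction of the first paragraph this proves the proposition.

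I do not expect a genuine obstacle here: conceptually the statement just asserts that the solution set of~(\ref{Fpreserve}) is an affine space whose difference space is the span of the $v^\mu_I$, and the whole argument is the elimination chain of the proof of Proposition~\ref{exist} applied to the homogeneous problem. The only care needed is bookkeeping --- checking that the coefficients $A$, $B^I$, $C^a$ are honest smooth functions of $(x,dx)$ (which holds since the $\ell$'s form a basis by that earlier Proposition) and tracking homogeneity degrees so that Euler's theorem is applied legitimately and so that the resulting $B^I$ carries the same degree as the admissible $\lambda^I$.
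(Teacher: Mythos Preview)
Your proposal is correct and follows essentially the same approach as the paper. The only difference is organizational: the paper first cites Proposition~\ref{exist} for the uniqueness of $G^\mu$ (the last sentence of that proof already states it) and then gives a three-line difference argument---set $B^\mu{}_\alpha=\tilde N^\mu{}_\alpha-N^\mu{}_\alpha$, differentiate $B^\mu{}_\alpha dx^\alpha=0$ by $dx^\beta$, and use symmetry plus Euler to get $2B^\mu{}_\beta=0$---whereas you invert the order, first invoking the computation preceding~(\ref{G}) to force $N^\mu{}_\alpha=\bib{G^\mu}{dx^\alpha}$ and then re-running the elimination chain of Proposition~\ref{exist} on the difference $H^\mu$ to recover the uniqueness of $G^\mu$ modulo the $v^\mu_I$. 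Both halves are the same arguments in disguise; the paper's packaging is simply more economical.
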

\begin{proof}
 From {Proposition \ref{exist}},
 $G^\mu$ is unique. So we should prove that
 if ${N^\mu}_{\alpha}$ and ${\tilde{N}^\mu}{}_{\alpha}$
 satisfy (\ref{symmetry}), (\ref{Fpreserve})
 and $G^\mu=\frac12{N^\mu}_{\alpha}dx^\alpha
 =\frac12{\tilde{N}^\mu}{}_{\alpha}dx^\alpha$,
 then ${N^\mu}_{\alpha}={\tilde{N}^\mu}{}_{\alpha}$.
 \\
 Let us define ${B^\mu}_{\alpha}={\tilde{N}^\mu}{}_{\alpha}-{N^\mu}_{\alpha}$.
 Then ${B^\mu}_{\alpha}$ satisfies
\begin{eqnarray}
 \bib{{B^\mu}_{\alpha}}{dx^\beta}=\bib{{B^\mu}_{\beta}}{dx^\alpha}, \quad
 \bib{{B^\mu}_{\alpha}}{dx^\beta}dx^\beta
 =\bib{{B^\mu}_{\beta}}{dx^\alpha}dx^\beta={B^\mu}_\alpha, \quad
 {B^\mu}_\alpha dx^\alpha=0. \label{B3}
\end{eqnarray}
Differentiating the third equation of (\ref{B3}) with respect to $dx^\beta,$
\begin{eqnarray}
 0=\bib{{B^\mu}_{\alpha}}{dx^\beta} dx^\alpha
     +{B^\mu}_{\beta}
 =2{B^\mu}_{\beta}.
\end{eqnarray}
\end{proof}

\section{Euler-Lagrange equation}

For an arbitrary singular Finsler manifold $(M,L)$ which has ${\rm rank}(L_{\mu\nu})=n-D$,
the Euler-Lagrange equations are defined by
\begin{eqnarray}
 0=\bib{L}{x^\alpha}-d\left(\bib{L}{dx^\alpha}\right)
 =\bib{L}{x^\alpha}-
\frac{\partial^2 L}{\partial x^\beta \partial dx^\alpha}dx^\beta
-\frac{\partial^2 L}{\partial dx^\beta \partial dx^\alpha}d^2 x^\beta.
\label{EL}
\end{eqnarray}
Precisely (\ref{EL}) are equations for an oriented curve
$\bm{c} \subset M$:
\begin{eqnarray}
 0=c^\ast\left\{\bib{L}{x^\mu}-d\left(\bib{L}{dx^\mu}\right)\right\},
\end{eqnarray}
by using a parameterisation of $\bm{c}$, i.e.~
$c(t):T \subset \mathbb{R} \to M$,
and $dx^\mu$ and $d^2x^\mu$ pull-backed:
\begin{eqnarray}
 c^\ast dx^\mu=\frac{dx^\mu(t)}{dt}dt, \quad
 c^\ast d^2 x^\mu=\frac{d^2 x^\mu(t)}{dt^2} dt^2. 
\end{eqnarray}
In our paper for avoiding cumbersome symbol $c^\ast$, we will drop the pull-back symbols.
Please look at the paper~\cite{OYIT2014} for these convenient notations in more details.

Let us express this equation with the previous non-linear Finsler connection.
From the condition (\ref{Fpreserve}),
by differentiation by $dx^\alpha$, we can get
\begin{eqnarray}
 \frac{\partial^2 L}{\partial dx^\alpha \partial x^\beta}
 =\bib{p_\mu}{dx^\alpha} {N^\mu}_{\beta}
 +p_\mu \bib{{N^\mu}_{\beta}}{dx^\alpha}.
\end{eqnarray}
Multiplying this by $dx^\beta$,
\begin{eqnarray}
 \frac{\partial^2 L}{\partial dx^\alpha \partial x^\beta}dx^\beta
=\bib{p_\mu}{dx^\alpha} {N^\mu}_{\beta} dx^\beta
+p_\mu \bib{{N^\mu}_{\beta}}{dx^\alpha} dx^\beta. 
\end{eqnarray}
Furthermore with (\ref{symmetry}) and
$1$-st homogeneity of ${N^\mu}_{\beta}$,
\begin{eqnarray}
 \frac{\partial^2 L}{\partial x^\beta \partial dx^\alpha}dx^\beta
=\frac{\partial^2 L}{\partial dx^\alpha \partial dx^\mu}
{N^\mu}_{\beta}dx^\beta+\bib{L}{x^\alpha}. 
\end{eqnarray}
Then
\begin{eqnarray}
 \bib{L}{x^\alpha}-
 \frac{\partial^2 L}{\partial x^\beta \partial dx^\alpha}dx^\beta
 =-\frac{\partial^2 L}{\partial dx^\alpha \partial dx^\mu}
{N^\mu}_{\beta}dx^\beta. 
\end{eqnarray}
Therefore Euler-Lagrange equation (\ref{EL}) can be written as
\begin{eqnarray}
 0=\frac{\partial^2 L}{\partial dx^\alpha \partial dx^\mu}
\left( d^2x^\mu+
{N^\mu}_{\beta}dx^\beta
\right)
=\frac{\partial^2 L}{\partial dx^\alpha \partial dx^\mu}
\left( d^2x^\mu+2G^\mu \right). 
\end{eqnarray}
Using the previous non-linear Finsler connection,
the Euler-Lagrange equation of $L$ is equivalent to the
auto-parallel equation
\begin{eqnarray}
 \left\{
 \begin{array}{l}
  d^2x^\mu+2G^\mu(x,dx)=\lambda^0 \ell^\mu_0+\lambda^I \ell^\mu_I,
 \\
 {\cal C}_I:=M_I-L_{Ia}L^{ab}M_b=0,\quad (I=1,2,\dots,D),
 \end{array}
 \right. \label{autop}
\end{eqnarray}
where $\lambda^0, \, \lambda^I$ are arbitrary
$2$-nd homogeneous function with respect to $dx^\mu$.
From physical viewpoint,
(\ref{autop}) are equations of a motion of system constrained on a surface
which is defined by second equations ${\cal C}_I=0$.
The arbitrary function $\lambda^0$ is
determined by taking a time parameter, and another arbitrary functions of $\lambda^I$
are determined from the consistency with derivatives of the second
constraint equations of (\ref{autop}), and
the others of $\lambda^I$ remain arbitrary.
Also in a Riemannian space, we can define {\it Finsler arc length parameter} $s$
which satisfies $L\left(x(s),\frac{dx(s)}{ds}\right)=1$.
Taking the differentiation with respect to $s$
we get a ``time fixing condition'',
\begin{eqnarray}
 0=\bib{L^\ast}{x^\mu}\frac{dx^\mu}{ds}+\bib{L^\ast}{dx^\mu}\frac{d^2 x^\mu}{ds^2},
 \quad L^\ast:=L\left(x(s),\frac{dx(s)}{ds}\right). 
\end{eqnarray}
Using the parameterised auto-parallel equation (\ref{autop}),
\begin{eqnarray}
 \frac{d^2 x^\mu}{ds^2}+2G^\mu\left(x(s),\frac{dx(s)}{ds}\right)
 =\xi^0 \frac{dx^\mu}{ds}+\xi^I v^\mu_I\left(x(s),\frac{dx(s)}{ds}\right), 
\end{eqnarray}
where $\xi^0:={\lambda^0\left(x(s),\frac{dx(s)}{ds}\right)}/{L^\ast}$ and
$\xi^I:=\lambda^I\left(x(s),\frac{dx(s)}{ds}\right)$,
and parameterised property of $G^\mu$ of non-linear Finsler connection (\ref{omega}),
\begin{eqnarray}
 \bib{L^\ast}{x^\mu}\frac{dx^\mu}{ds}=2\bib{L^\ast}{dx^\mu}
 G^\mu\left(x(s),\frac{dx(s)}{ds}\right), 
\end{eqnarray}
we can show $\xi^0=0$ by the following
\begin{eqnarray}
 0=\bib{L^\ast}{x^\mu}\frac{dx^\mu}{ds}+\bib{L^\ast}{dx^\mu}
 \left\{\xi^0 \frac{dx^\mu}{ds}+\xi^I v^\mu_I-2G^\mu\right\}=\xi^0. 
\end{eqnarray}
Therefore, choosing the Finsler arc length parameter corresponds to $\lambda^0=0$.

\section{Examples}

\subsection*{Riemannian case}

Riemannian manifold $(M^{n+1},g)$ can be considered as a Finsler manifold
given by,
\begin{eqnarray}
 L=\sqrt{g_{\mu\nu}(x)dx^\mu dx^\nu}, \quad (\mu,\nu=0,1,2,\dots,n),
\end{eqnarray}
where $g_{\mu\nu}(x)$ are functions of only coordinates $(x^\mu)$ of $M$,
and $L$ is a regular.
We can easily recognize that the Levi-Civita connection 
$\stackrel{{\tiny \mbox{\rm LC}}}{\varGamma^\mu}_{\alpha\beta}$
becomes our ``non-linear'' connection,
ckecking it to satisfy the equation (\ref{Fpreserve}),
\begin{eqnarray}
 \frac{1}{2L} \bib{g_{\mu\nu}}{x^\beta}dx^\mu dx^\nu
 =\frac{g_{\mu\nu}dx^\nu}{L}
 \stackrel{{\tiny \mbox{\rm LC}}}{\varGamma^\mu}_{\alpha\beta} dx^\alpha, 
\end{eqnarray}
and using uniqueness theorem,
Proposition \ref{unique}.

However we will look for it using existence theorem, Proposition \ref{exist}.
i.e. we will directly calculate
\begin{eqnarray}
 2G^\mu=\left(dx^\beta \bib{L}{x^\beta}\right)\frac{dx^\mu}{L}
 +L^{ab}\ell^\mu_a \left(-\bib{L}{x^b}+dx^\rho \bbib{L}{dx^n}{x^\rho}\right),
\end{eqnarray}
where the Greek indices run as $\beta,\mu,\rho=0,1,2,\dots,n$ and 
the Latin indices run as $a,b=1,2,3,\dots, n$, and we use the summation convention
for appearing same labels.
\begin{eqnarray}
p_\mu
=\bib{L}{dx^\mu}
=\frac{dx_\mu}{L},
\quad
L_{\mu\nu}
=\frac{\partial^2 L}{\partial dx^\mu \partial dx^\nu}
=\frac{1}{L}\left\{
g_{\mu\nu}-\frac{dx_\mu dx_\nu}{L^2}
\right\}, 
\end{eqnarray}
and
$(n\times n)$ matrices
$(L_{ab})=\frac{1}{L}\left( g_{ab}-\frac{dx_a dx_b}{L^2}\right)$
has an inverse matrices 
\begin{eqnarray}
(L^{ab})=L\left(
g^{ab}-g^{0a}\frac{dx^b}{dx^0}-g^{0b}\frac{dx^a}{dx^0}+g^{00}\frac{dx^a dx^b}{(dx^0)^2}
\right),
\end{eqnarray}
where $dx_\mu:=g_{\mu\nu}dx^\nu$ and $(g^{\mu\nu})$ is an inverse matrices of $(g_{\mu\nu})$.
Then
\begin{eqnarray*}
&&
dx^\beta \bib{L}{x^\beta}=\frac1{2L}\bib{g_{\mu\nu}}{x^\beta}dx^\beta dx^\mu dx^\nu,
\quad 
\ell^\mu_a=\delta^\mu_a-\frac{dx_a dx^\mu}{L^2}, \quad
L^{ab}\ell^\mu_a=L\left(g^{b\mu}-g^{0\mu}\frac{dx^b}{dx^0}\right),
\\
&&
-\bib{L}{x^j}+dx^\rho \frac{\partial^2 L}{\partial dx^j \partial x^\rho}
=-\frac{1}{2L}\bib{g_{\mu\nu}}{x^j}dx^\mu dx^\nu
+\frac{1}{L}\bib{g_{j\nu}}{x^\rho}dx^\nu dx^\rho
-\frac{1}{2L^3}
\bib{g_{\mu\nu}}{x^\rho}g_{j\sigma}
dx^\mu dx^\nu dx^\rho dx^\sigma,
\end{eqnarray*}
therefore we get
\begin{eqnarray}
2G^\mu
&=&\left(
-\frac12 g^{\mu\nu} \bib{g_{\alpha\beta}}{x^\nu}
+g^{\mu\nu}\bib{g_{\nu\alpha}}{x^\beta}\right)dx^\alpha dx^\beta
=\frac12 g^{\mu\nu}
\left( 
\bib{g_{\nu\alpha}}{x^\beta}+\bib{g_{\nu\beta}}{x^\alpha}
-\bib{g_{\alpha\beta}}{x^\nu}
\right)dx^\alpha dx^\beta \nonumber \\
&=&
\stackrel{{\tiny \mbox{\rm LC}}}{\varGamma^\mu}_{\alpha \beta}dx^\alpha dx^\beta.
\end{eqnarray}
We will add a little comment on $m$-th polynomial form metric such as
\begin{eqnarray}
 L=\sqrt[\leftroot{0} \uproot{2} m] {g_{\mu_1\mu_2\dots,\mu_m}(x)
dx^{\mu_1}dx^{\mu_2}\cdots dx^{\mu_m}}.
\end{eqnarray}
where $m=4$ quartic form metric had been mentioned by Riemann.
In usual line element treatment, we start from 
$g_{\mu\nu}(x,y)=\frac12\bbib{L(x,y)}{y^\mu}{y^\nu}$,
but $g_{\mu\nu}(x,y)$ becomes uglier form if $m\neq 2$.
On the other hand, our non-linear Finsler definitions are
\begin{eqnarray}
 \frac{1}{mL^{m-1}}
 \left(
 \bib{g_{\mu_1\mu_2\dots \mu_m}}{x^\alpha}dx^{\mu_1} dx^{\mu_2}\cdots dx^{\mu_m}
 \right)=
 \frac{g_{\mu\mu_2\mu_3\dots \mu_m}dx^{\mu_2}dx^{\mu_3}\cdots dx^{\mu_m}}{L^{m-1}}
 {N^\mu}_\alpha,
\end{eqnarray}  
therefore they would be quite simple and convenient definition.

\subsection*{Regular simple case (potential system)}

For application to classical dynamics, 
the potential system
which is a particle motion in three dimensional
Euclidian space $\mathbb{R}^3$ influenced by potential force
is the most simple and impotant case.
It's Finsler metric is given by
\begin{eqnarray}
 L=\frac{m}2 \frac{(dx^1)^2+(dx^2)^2+(dx^3)^2}{dx^0}
 -V(x^1)dx^0. 
\end{eqnarray}
We will calculate the $G^\mu$ from this Finsler metric by
\begin{eqnarray}
2G^\mu=\left(dx^\beta \bib{L}{x^\beta} \right)
\frac{dx^\mu}{L}+L^{ab}\ell^\mu_a
\left(
-\bib{L}{x^b}+dx^\rho \frac{\partial^2 L}{\partial dx^b \partial x^\rho}
\right), \label{ex1G}
\end{eqnarray}
where the Greek indices run as $\beta, \mu, \rho = 0,1,2,3$ and
the Latin indices run as $a,b=1,2,3$, and
we also use the summation convention for appearing same labels.
For this Finsler metric is regular, there is no $\ell^\mu_I$ terms
and no constraint equation in (\ref{ex1G}),
and its auto-parallel equation becomes
\begin{eqnarray}
 d^2 x^\mu+2G^\mu(x,dx)=\lambda \ell^\mu_0, \label{autop1}
\end{eqnarray}
with an arbitrary function $\lambda(x,dx)$.
Let us calculate and check this.
\begin{eqnarray}
&&
p_0=-\left\{
\frac{m}{2}\sum_{i=1}^3 \left(\frac{dx^i}{dx^0}\right)^2+V(x^1,x^2,x^3)
\right\}, \quad
p_i=m\frac{dx^i}{dx^0}, \quad (i=1,2,3), \nonumber
\\
&&
(L_{\mu\nu})=
\left(
\begin{array}{cccc}
m\frac{(dx^1)^2+(dx^2)^2+(dx^3)^2}{(dx^0)^3} & -\frac{mdx^1}{(dx^0)^2} &
-\frac{mdx^2}{(dx^0)^2} & -\frac{mdx^3}{(dx^0)^2} \\
-\frac{mdx^1}{(dx^0)^2} & \frac{m}{dx^0} & 0 & 0 \\
-\frac{mdx^2}{(dx^0)^2} & 0 & \frac{m}{dx^0} & 0 \\
-\frac{mdx^3}{(dx^0)^2} & 0 & 0 & \frac{m}{dx^0} \\
\end{array}
\right),
\quad
(L^{ab})=
\left(
\begin{array}{ccc}
 \frac{dx^0}{m} & 0 & 0 \\
 0 & \frac{dx^0}{m} & 0 \\
 0 & 0 & \frac{dx^0}{m} \\
\end{array}
\right),
\nonumber
\\
&&
\ell^\mu_a=L\bib{}{dx^a}\left(\frac{dx^\mu}{L}\right)
=\delta^\mu_a-\frac{mdx^\mu dx^a}{Ldx^0}, \quad
L^{ab}\ell^\mu_a=
 \frac{dx^0 \delta^\mu_b}{m}-\frac{dx^\mu dx^b}{L},
\quad (a,b=1,2,3),
\nonumber
\\
&&
dx^\beta \bib{L}{x^\beta}=- \bib{V}{x^a}dx^0 dx^a, \quad
-\bib{L}{x^b}+dx^\rho \frac{\partial^2 L}{\partial dx^b \partial x^\rho}
=\bib{V}{x^b}dx^0, \quad (a,b=1,2,3),
\nonumber
\\
&&
\hspace{80pt}
2G^\mu=\left\{
\begin{array}{ll}
 \medskip
 \displaystyle
 -2 \bib{V}{x^b}\frac{(dx^0)^2dx^b}{L} & (\mu=0), \\
 \displaystyle
 \bib{V}{x^b}\left\{
 \frac{(dx^0)^2\delta^{ab}}{m}-2\frac{dx^0dx^a dx^b}{L}
 \right\} & (\mu=a=1,2,3).
\end{array}
\right.
\end{eqnarray}
If we take Finsler arc length parameter $s$,
which is defined by
\begin{eqnarray}
 1=:L\left(x^\mu(s),\frac{dx^\mu(s)}{ds}\right)
 =\frac{m}{2}\sum_{a=1}^3 \frac{(\dot{x}^a)^2}{\dot{x}^0}-V(x(s))\dot{x}^0, \label{ex1.L}
 \quad \dot{x}^\mu:=\frac{dx^\mu(s)}{ds}, \label{parameterp1}
\end{eqnarray}
and its derivative by $s$,
\begin{eqnarray}
 m\left(\frac{\dot{x}^a}{\dot{x}^0}\right)\ddot{x}^a
 -\left\{\frac{m}{2}\left(\frac{\dot{x}^a}{\dot{x}^0}\right)^2+V \right\}
 \ddot{x}^0
 -\dot{x}^0\dot{x}^a \bib{V}{x^a}=0, \label{parameterp2}
\end{eqnarray}
are the time gauge fixing conditions. Using this parameter $s$,
\begin{eqnarray}
 2G^0=-2\bib{V}{x^b}(\dot{x}^0)^2\dot{x}^b (ds)^2, \quad
 2G^a=\bib{V}{x^b} \left\{\frac{(\dot{x}^0)^2 \delta^{ab}}{m}
-2\dot{x}^0 \dot{x}^a \dot{x}^b\right\}(ds)^2, 
\end{eqnarray}
and the auto-parallel equation (\ref{autop1}) becomes
\begin{eqnarray}
 \ddot{x}^0-2(\dot{x}^0)^2\dot{x}^b \bib{V}{x^b}
 =\xi \dot{x}^0, \quad
 \ddot{x}^a+\frac{(\dot{x}^0)^2}{m}\bib{V}{x^a}
 -2\dot{x}^0 \dot{x}^a \dot{x}^b \bib{V}{x^b}=\xi \dot{x}^a , \label{autop2}
\end{eqnarray}
where we substitute $\lambda$ to $\xi=\lambda\left(x(s),\frac{dx(s)}{ds}\right)$
which is an arbitrary function of $s$.
By this equation and time gauge fixing conditions
(\ref{parameterp1}) and (\ref{parameterp2}), we can eliminate $\ddot{x}^\mu$ and
then we can get $\xi=0$, which was also showed in previous section in the case
of using a Finsler arc length parameter.

In this case, the auto-parallel equation (\ref{autop2}) with $\xi=0$ can be
also derived from time gauge fixing (\ref{parameterp1}), (\ref{parameterp2}) and
Euler-Lagrange equation of $L\left(x(s),\dot{x}(s)\right)$,
\begin{eqnarray}
 0=\frac{d}{ds} \left\{ \frac{m}{2} \left(\frac{\dot{x}^a}{\dot{x}^0}\right)^2
 +V \right\}, \quad
 0 =\dot{x}^0 \dot{x}^a \bib{V}{x^a}
 -\frac{d}{ds}\left(
 \frac{m \ddot{x}^a}{\dot{x}^0}\right), 
\end{eqnarray}
for this Lagrange system is not a gauge system.

If we choose other parametrisation $t=x^0$ and redefine
$\dot{x}^\mu:=\frac{dx^\mu(t)}{dt}$ and $\ddot{x}^\mu:=\frac{d^2 x^\mu(t)}{dt^2}$, then
\begin{eqnarray}
 2G^0\left(x(t),\frac{dx(t)}{dt}\right)=-\frac{2\dot{x}^a}{L}\bib{V}{x^a}, \quad
 2G^a\left(x(t),\frac{dx(t)}{dt}\right)=\left\{
 \frac{\delta^{ab}}{m}-2\frac{\dot{x}^a \dot{x}^b}{L}
 \right\}\bib{V}{x^b}, 
\end{eqnarray}
and the auto-parallel equation (\ref{autop1}) becomes
\begin{eqnarray}
 0-\frac{2\dot{x}^a}{L}\bib{V}{x^a}=\xi, \quad
 \ddot{x}^a+\frac{1}{m}\bib{V}{x^a}-2\frac{\dot{x}^a \dot{x}^b}{L}\bib{V}{x^b}=\xi \dot{x}^a.
\end{eqnarray}
Therefore the equation corresponds to
$\ddot{x}^a+\frac{1}{m}\bib{V}{x^a}=0$, that is, the usual form of equation of motion.

\subsection*{Constrained system (2nd class constraint)}

In physics, we call a system which has a singular Finsler Lagrangian {\it a gauge system}
or {\it a constrained system}.
Let us consider a specific example of these systems given by
\begin{eqnarray}
 M=\mathbb{R}^3, \quad
 L(x,dx)=x^1 dx^2-x^2 dx^1+\left\{(x^1)^2+(x^2)^2\right\}dx^0.
\end{eqnarray}
Then conjugate momenta $p_\mu$ and $(L_{\mu\nu})$ are
\begin{eqnarray}
 p_0=(x^1)^2+(x^2)^2, \quad p_1=-x^2, \quad p_2=x^1, \quad
 (L_{\mu\nu})=O,
\end{eqnarray}
therefore there are two zero eigenvectors of $(L_{\mu\nu})$,
$v_1^\mu=\delta_1^\mu$ and $v_2^\mu=\delta_2^\mu$ except for $dx^\mu$.
We can get $G^\mu$ from the formula (\ref{solG}),
\begin{eqnarray}
 2G^\mu=2G^\mu_\ast+\lambda^1 v^\mu_1+\lambda^2 v_2^\mu,
 \quad
 2G^\mu_\ast
 :=\frac{2(x^1 dx^1+x^2 dx^2)dx^0 dx^\mu}{L},
\end{eqnarray}
where $\lambda^I \, (I=1,2)$ are arbitrary functions of $x$ and $dx$, and
there are two constraints:
\begin{eqnarray}
 {\cal C}_1=dx^2+x^1 dx^0=0, \quad {\cal C}_2=dx^1-x^2 dx^0=0.
 \label{ex2-const}
\end{eqnarray}
The auto-parallel equation becomes
\begin{eqnarray}
 \left\{
 \begin{array}{l}
 \medskip
 \displaystyle
 d^2 x^0+\frac{2(x^1 dx^1+x^2 dx^2)(dx^0)^2}{L}
 =\lambda dx^0, \\
 \medskip
 \displaystyle
 d^2 x^1+\frac{2(x^1 dx^1+x^2 dx^2)dx^0 dx^1}{L}
 +\lambda^1 =\lambda dx^1, \\
 \displaystyle
 d^2 x^2+\frac{2(x^1 dx^1+x^2 dx^2)dx^0 dx^2}{L}
 +\lambda^2 =\lambda dx^2.
 \label{ex2-autop1}
 \end{array}
 \right.
\end{eqnarray}
We will consider the above equation as a flow on the tangent bundle $TM$.
Using adopted coordinates $(x^\mu,dx^\mu)$ of $TM$,
the generator can be written by
\begin{eqnarray}
 X_T:=dx^\mu \bib{}{x^\mu}-2G^0_\ast\bib{}{dx^0}-2G^0_\ast\frac{dx^i}{dx^0}\bib{}{dx^i}
 +\lambda dx^\mu\bib{}{dx^\mu}-\lambda^1\bib{}{dx^1}-\lambda^2\bib{}{dx^2},
\end{eqnarray}
and the auto-parallel equation (\ref{ex2-autop1}) can be covariantly expressed by
\begin{eqnarray}
 dx^\mu=X_T(x^\mu), \quad d^2 x^\mu=d(dx^\mu)=X_T(dx^\mu). 
\end{eqnarray}
The consistency condition of the flow $X_T$ with constraints (\ref{ex2-const}) is
that they are conserved along the flow:
\begin{eqnarray}
 X_T({\cal C}_1)=X_T({\cal C}_2)=0, \quad {\rm mod}.({\cal C}_1, {\cal C}_2).
 \label{ex2-consistency}
\end{eqnarray}
If we take $\lambda^1$, $\lambda^2$ as
\begin{eqnarray}
 \left\{
 \begin{array}{l}
 \medskip
 \displaystyle
 \lambda^1=\lambda^1_\ast:=
 \lambda(dx^1-x^2dx^0)-\frac{2G^0_\ast}{dx^0}(dx^1-x^2 dx^0)-dx^0dx^2,
 \\
 \displaystyle
 \lambda^2=\lambda^2_\ast:=
 \lambda(dx^2+x^1dx^0)-\frac{2G^0_\ast}{dx^0}(dx^2+x^1 dx^0)+dx^0dx^1,
 \end{array}
 \right.
\end{eqnarray}
then (\ref{ex2-consistency}) are not only satisfied but also the equalities of
(\ref{ex2-consistency}) are exact: that is, they are {\it strong equalities}.
The following covariant equation
\begin{eqnarray}
 \left\{
 \begin{array}{l}
 \medskip
 dx^\mu=X_{T\ast}(x^\mu), \quad d^2x^\mu=X_{T\ast}(dx^\mu),\\
 \displaystyle
 X_{T\ast}
 =dx^\mu \bib{}{x^\mu}-2G^0_\ast\frac{dx^\mu}{dx^0}\bib{}{dx^\mu}
 +\lambda dx^\mu\bib{}{dx^\mu}-\lambda^1_\ast\bib{}{dx^1}-\lambda^2_\ast\bib{}{dx^2},
 \end{array}
 \right. \label{ex2-covhamilton}
\end{eqnarray}
is equivalent to the Hamilton formulation using {\it Dirac bracket}.
Because if we take a time parameter $t:=x^0$, that is,
we divide the former equation of (\ref{ex2-covhamilton}) by $dx^0$ and
the latter by $(dx^0)^2$ and put $\frac{dx^0}{dx^0}=1, \, \frac{d^2 x^0}{(dx^0)}=0$,
then $\lambda$ is determined as $\lambda=2G^0_\ast/dx^0$, and
the equation becomes
\begin{eqnarray}
 \frac{dx^i}{dt}=X(x^i), \quad \frac{dy^i}{dt}=X(y^i), \quad
 X:=\bib{}{x^0}+y^i\bib{}{x^i}+y^2 \bib{}{y^1}-y^1\bib{}{y^2},
 \label{ex2-hamilton}
\end{eqnarray}
where we change the homogenous coordinate $(dx^\mu)$ to the usual $(y^i)$
defined by $y^i=\frac{dx^i}{dx^0}$.
This vector field $X$ of (\ref{ex2-hamilton}) can be given by $X=\bib{}{t}+w(dH,\cdot)$,
which is defined by a following Poisson structure $w$ and Hamiltonian $H$;
\begin{eqnarray}
 w=\bib{}{y^1}\w \bib{}{x^1}+\bib{}{y^2}\w \bib{}{x^2}-\bib{}{y^1}\w \bib{}{y^2},
 \quad H=\frac12\left\{\left(y^1\right)^2+\left(y^2\right)^2\right\}.
\end{eqnarray}
So the equation of this example is equivalent to
\begin{eqnarray}
 \frac{dx^i}{dt}=y^i, \quad \frac{dy^1}{dt}=y^2, \quad \frac{dy^2}{dt}=-y^1.
\end{eqnarray}
This system is a Fermi particle model whose dynamics corresponds harmonic oscillator,
and is also a gauge constraint system having {\it 2nd class constraint}
of Dirac's classification~\cite{Dirac_Yeshiba, Sugano-Kamo}.
As our formulation is covariant (reparamerisation invariant),
the equation (\ref{ex2-covhamilton}) is a covariant generalisation
of Dirac proceduree using our non-linear Finsler connection.

\subsection*{Constrained system (Frenkel's model)}

The next example of gauge system is a quite pathological example, but
it is known as Dirac conjecture does not hold~\cite{Frenkel, Sugano-Kamo}.
\begin{eqnarray}
 M=\mathbb{R}^4, \quad
 L(x,dx)=\frac{dx^2 (dx^3)^2}{(dx^0)^2}-\frac12 x^1 (x^3)^2 dx^0.
\end{eqnarray}
This Euler-Lagrange equation becomes
\begin{eqnarray}
 \left\{
\begin{array}{l}
 \displaystyle
 0=d\left\{ 
 -2\frac{dx^2 (dx^3)^2}{(dx^0)^3}-\frac12 x^1(x^3)^2 \right\}, \\
 \displaystyle
 0=\frac12 (x^3)^2 dx^0, \\
 \displaystyle
 0=d\left(\frac{dx^3}{dx^0}\right)^2, \\
 \displaystyle
 0=x^1 x^3 dx^0+2d\left\{ \frac{dx^2 dx^3}{(dx^0)^2}\right\},
\end{array}
 \right.
 \Leftrightarrow \quad
 \left\{
\begin{array}{l}
 \displaystyle
 0=\frac{dx^3}{dx^0} \,d\left(\frac{dx^3}{dx^0}\right), \\
 \displaystyle
 0=x^1 x^3 dx^0+2d\left\{ \frac{dx^2 dx^3}{(dx^0)^2}\right\},\\
 0=x^3.
\end{array}
 \right. 
\end{eqnarray}
If we take a conventional time parameter $t=x^0$, then
\begin{eqnarray}
 \dot{x}^3 \ddot{x}^3=0, \quad
 2\ddot{x}^2 \dot{x}^3+2\dot{x}^2 \ddot{x}^3+x^1 x^3=0, \quad x^3=0, 
\end{eqnarray}
and these equal to
\begin{eqnarray}
 x^1=\xi^1(t), \quad x^2=\xi^2(t), \quad x^3=0, \label{Fsol}
\end{eqnarray}
where $\xi^1$ and $\xi^2$ are arbitrary function of $t$.

We will express this system to auto-parallel form
using our non-linear Finsler connection.
Conjugate momentum $p_\mu=\bib{L}{dx^\mu}$ are
\begin{eqnarray}
  p_0=
  -2\frac{dx^2 (dx^3)^2}{(dx^0)^3}-\frac12 x^1 (x^3)^2, \quad
  p_1=0, \quad
  p_2=\left(\frac{dx^3}{dx^0}\right)^2,\quad
  p_3=2\frac{dx^2 dx^3}{(dx^0)^2}, 
\end{eqnarray}
and $(L_{\mu\nu})$ is
\begin{eqnarray}
 (L_{\mu\nu})=\left(
 \begin{array}{cccc}
  6\frac{dx^2 (dx^3)^2}{(dx^0)^4} & 0 & -2 \frac{(dx^3)^2}{(dx^0)^3}
   & -4\frac{dx^2 dx^3}{(dx^0)^3} \\
  0 & 0 & 0 & 0 \\
  -2\frac{(dx^3)^2}{(dx^0)^3} & 0 & 0 & 2\frac{dx^3}{(dx^0)^2} \\
  -4\frac{dx^2 dx^3}{(dx^0)^3} & 0 & 2\frac{dx^3}{(dx^0)^2}  & 2\frac{dx^2}{(dx^0)^2}
 \end{array}
 \right). 
\end{eqnarray}
We can recognise ${\rm rank}(L_{\mu\nu})=2$ and so there is one zero eigenvector $v^\mu$
of $(L_{\mu\nu})$ except for $dx^\mu$, and we can take $v_1^\mu=\delta^\mu_1$.
Calculations are following
\begin{eqnarray*}
 &&\ell^\mu_0=\frac{dx^\mu}{L}, \quad
 \ell^\mu_1=v_1^\mu=\delta^\mu_1, \quad
 \ell^\mu_2=\delta^\mu_2-\frac{dx^\mu p_2}{L},\quad
 \ell^\mu_3=\delta^\mu_3-\frac{dx^\mu p_3}{L}, \\
 &&M_0=\frac14(x^3)^2 dx^1+\frac12 x^1 x^3 dx^3, \,
 M_1=\frac14 (x^3)^2dx^0, \, M_2=0, \, M_3=\frac12 x^2 x^3 dx^0, \\
 &&(L^{ab})=\left(
 \begin{array}{cc}
 -\frac{(dx^0)^2 dx^2}{2 (dx^3)^2}  & \frac{(dx^0)^2}{2dx^3} \\
 \frac{(dx^0)^2}{2 dx^3} &  0
 \end{array}
 \right), \,
 (\lambda^a)=(L^{ab}M_b)=\left(
 \begin{array}{c}
 x^2 x^3\frac{(dx^0)^3}{4 dx^3} \\
 0
 \end{array}
 \right), \, (a,b=2,3),\\
 &&dx^\beta \bib{L}{x^\beta}=-\frac12(x^3)^2 dx^0 dx^1-x^1x^3 dx^0 dx^3,\quad
 {\cal C}:=M_1-L^{ab}L_{a1}M_b=\frac14(x^3)^2 dx^0=0.
\end{eqnarray*}
With a constraint ${\cal C}=x^3=0$, $G^\mu$
of the non-linear connection are given by
\begin{eqnarray}
 G^\mu&=&\frac12\left(dx^\beta\bib{L}{x^\beta}\right)\ell^\mu_0
 +\lambda^1 \ell^\mu_1+\lambda^2 \ell^\mu_2+\lambda^3 \ell^\mu_3 \nonumber \\
 &=& \left\{
 -\frac14 (x^3)^2dx^0 dx^1-\frac12 x^1x^3 dx^0 dx^3\right\}\frac{dx^\mu}{L}
 +\lambda^1 \delta^\mu_1+\frac{x^2x^3}{4}\frac{(dx^0)^3}{dx^3}\ell^\mu_2 \nonumber \\
 &=&\lambda^1 \delta^\mu_1
 +\lim_{x^3,dx^3 \to 0} \frac{x^2 x^3}{4}\frac{(dx^0)^3}{dx^3} \delta^\mu_2.
\end{eqnarray}
The last term is ambiguous because the limit cannot be defined.
A corresponding difficulty also occurs in $(L_{ab})$.
The matrix $(L_{ab})$ takes a value with the constraint $x^3=0$
\begin{eqnarray}
 (L_{ab})=\left(
\begin{array}{cc}
 0&0 \\
 0 &\frac{2dx^2}{(dx^0)}
\end{array}
 \right), \quad (a,b=2,3), 
\end{eqnarray}
but it has no inverse.
On a constraint surface defined by ${\cal C}=0$, the total rank of $(L_{\mu\nu})$ becomes one.
In other word, there is also another zero eigenvalued function of $(L_{\mu\nu})$:
$v_2^\mu=\delta^\mu_2$.
Therefore on the constraint surface $x^3=0$, $G^\mu$ has the following form
\begin{eqnarray}
 G^\mu=\frac12\left(dx^\beta \bib{L}{x^\beta}\right)\frac{dx^\mu}{L}
 +\lambda^1 v^\mu_1+\lambda^2 v^\mu_2+\lambda^3 \ell^\mu_3
 =\lambda^1 \delta^\mu_1+\lambda^2 \delta^\mu_2,
\end{eqnarray}
where $\lambda^i, \, (i=1,2)$ are arbitrary function.
Then the auto-parallel equation becomes
\begin{eqnarray}
\left\{
\begin{array}{l}
 d^2 x^0=\lambda dx^0, \\
d^2 x^1=\lambda dx^1+\lambda^1, \\
d^2 x^2=\lambda dx^2+\lambda^2, \\
d^2 x^3=\lambda dx^3. \label{ex3-autop}
\end{array}\right.
\end{eqnarray}
Considering the constraints $x^3=dx^3=0$,
we can rewrite this equation (\ref{ex3-autop}) as Hamilton flow like the previous example,
\begin{eqnarray}
 \left\{
 \begin{array}{l}
 \medskip
 dx^\mu=X_{T\ast}(x^\mu), \quad d^2 x^\mu=X_{T\ast}(dx^\mu),  \\
 \displaystyle
 X_{T\ast}=dx^\mu \bib{}{x^\mu}+\lambda dx^\mu \bib{}{dx^\mu}
 +\xi^1 dx^1 \bib{}{dx^1}+\xi^2 dx^2 \bib{}{dx^2}, \label{ex3-covhamilton}
 \end{array}
 \right.
\end{eqnarray}
where $\lambda^i \, (i=1,2)$ are redefined so as to integrability, and
$\xi^i \, (i=1,2)$ are arbitrary functions.
If we take a time parameter $t=x^0$, then $\lambda=0$ and equation (\ref{ex3-covhamilton})
becomes
\begin{eqnarray}
 \frac{dx^i}{dt}=X(x^i), \quad \frac{dy^i}{dt}=X(y^i), \quad
 X=\bib{}{t}+y^i\bib{}{x^i}+\xi^i(t) y^i \bib{}{y^i},
\end{eqnarray}
which is consistent with the solution of (\ref{Fsol}).
This Frenkel model is a constraint system having {\it 1st class constraint}
of Dirac's classification~\cite{Frenkel, Sugano-Kamo}.
Even in these quite non-trivial example, our non-linear connection
will be convenient for rewriting Lagrangian form to covariant Hamiltonian form
(\ref{ex3-covhamilton}).

\section{Discussions}

We propose a new non-linear Finsler connection which is a generalization of 
the non-linear part of the Berwald's connection.
If Finsler metric $L$ is regular, the Euler-Lagrange equation 
$0=c^\ast \left\{\bib{L}{x^\alpha}-d\left(\bib{L}{dx^\alpha}\right)\right\}$
derived form 
the variational principle of the action functional ${\cal A}[\bm{c}]=\int_{\bm{c}}L$
becomes equivalent to $d^2 x^\mu+2G^\mu(x,dx)=\lambda(x,dx) dx^\mu$ using our
non-linear connection ${N^\mu}_\beta$, $G^\mu=\frac12 {N^\mu}_\beta dx^\beta$,
and arbitrary function $\lambda$.
If we take Finsler arc-length parameter $s$, discussing at section IV,
$\lambda$ becomes zero, and the non-linear connection is given by
${N^\mu}_\beta=\bib{G^\mu}{dx^\beta}$;
that is our ${N^\mu}_\beta$ is exactly the non-linear connection of Berwald.

We can formally define a torsion operator and a curvature operator~\cite{Miron},
\begin{eqnarray*}
 &\displaystyle
 T(X,Y):=\nabla_X Y-\nabla_Y X-[X,Y]=X^\beta Y^\alpha  
 \left\{\bib{{N^\mu}_{\beta}}{dx^\alpha}\left(x,dx(Y)\right)
 -\bib{{N^\mu}_{\alpha}}{dx^\beta}\left(x,dx(X)\right)
 \right\}\bib{}{x^\mu},\\
 & \displaystyle
 {R^\mu}_{\beta\gamma}(x,dx):=\bib{{N^\mu}_\gamma}{x^\beta}-\bib{{N^\mu}_\beta}{x^\gamma}
 +{N^\mu}_{\alpha\beta}{N^\alpha}_\gamma
 -{N^\mu}_{\alpha\gamma}{N^\alpha}_\beta.
\end{eqnarray*}
In spite of symmetry ${N^\mu}_{\alpha\beta}={N^\mu}_{\beta\alpha}$,
the torsion $T(X,Y) \neq 0$ because of non-linearity of ${N^\mu}_{\alpha \beta}$.
Furthermore our treatments bases on point-Finsler viewpoint, and
we only use the non-linear connection and consider tangent vectors on point manifold.
In many cases of physical problems, we can hardly give line element space vector 
$\tilde{X}=\tilde{X}^\mu(x,y)\bib{}{x^\mu}$ proper physical meaning.
We could hope our minimal setting Finsler connection ${N^\mu}_\beta$, its torsion, and
its curvature ${R^\mu}_{\beta\gamma}$ are applicable to fruitful problems in nature.

\begin{acknowledgments}
We thank Lajos Tam\'{a}ssy, Erico Tanaka and Muneyuki Ishida for creative discussions
and careful checking of calculation.
This work was greatly inspired by late Yasutaka Suzuki.
The first author has been supported by the European Union's Seventh
Framework Programme (FP7/2007-2013) under grant agreement no.~317721.
T. Ootsuka thanks JSPS Institutional Program for Young Researcher Overseas Visits.
\end{acknowledgments}

\bibliographystyle{jplain}

\begin{thebibliography}{10}
 
\bibitem{Bao-Chern-Shen}
D.~Bao, S.~S. Chern, and Z.~Shen,
\newblock {\em An Introduction to Riemann-{F}insler Geometry},
\newblock Springer, 2000.

\bibitem{ChernChenLam}
S.~S. Chern, W.~H. Chen, and K.~S. Lam,
\newblock {\em Lectures on Differential Geometry},
\newblock World Scientific, 2000.

\bibitem{Suzuki1956}
Y.~Suzuki,
\newblock Finsler Geometry in Classical Physics,
\newblock {\em Journal of the College of Arts and Sciences, Chiba University}, Vol.~2, pp.
  12--16, 1956.

\bibitem{Lanczos}
C.~Lanczos,
\newblock {\em The Variational Principles of Mechanics},
\newblock Dover Books on Physics, 1986.

\bibitem{Kozma-Tamassy1}
L.~Kozma and L.~Tam\'{a}ssy,
\newblock {F}insler geometry without line elements faced to applications,
\newblock {\em Reports on Mathematical Physics}, Vol.~51, pp. 233--250, 2003.

\bibitem{Aldaya}
V.~Aldaya and J.~A. de~Azc\'arraga,
\newblock Geometric formulation of classical mechanics and field theory,
\newblock {\em Rivista Del Nuovo Cimento}, Vol.~3, pp. 1--66, 1980.

\bibitem{Ootsuka2012}
T.~Ootsuka,
\newblock New covariant {L}agrange formulation for field theories,
\newblock {\em arXiv:1206.6040v1}, 2012.

\bibitem{OYIT2014}
T.~Ootsuka, R.~Yahagi, M.~Ishida, and E.~Tanaka,
\newblock Energy-momentum conservation laws in Finsler/Kawaguchi Lagrangian formulation,
\newblock {\em Classical and  Quantum Gravity}, Vol.~32, 165016, 2015.

\bibitem{Miron}
R.~Miron,
\newblock {\em The Geometry of Higher-Order Lagrange Spaces},
\newblock Kluwer Academic Publishers, 1997.

\bibitem{Dirac_Yeshiba}
P.~A.~M. Dirac,
\newblock {\em Lectures on Quantum Mechanics},
\newblock Yeshiba University, 1964.

\bibitem{Sugano-Kamo}
R.~Sugano and H.~Kamo,
\newblock Poincar\'e-Cartan Invariant Form and Dynamical Systems with
  Constraints,
\newblock {\em Progress of Theoretical Physics}, Vol.~67, pp. 1966--1988, 1982.

\bibitem{Frenkel}
A.~Frenkel,
\newblock Comment on Cawley's counterexample to a conjecture of Dirac,
\newblock {\em Physical Review D}, Vol.~21, No.~10, pp. 2986--2987, 1980.

\end{thebibliography}

\end{document}